\newcommand{\sysn}{\left\{\begin{array}{rcl}}
\newcommand{\sysk}{\end{array}\right.}
\newtheorem{theorem}{Theorem}[section]
\theoremstyle{example}
\theoremstyle{definition}
\newtheorem{definition}[theorem]{Definition}
\newtheorem{corollary}[theorem]{Corollary}
\journal{...}
\begin{document}

\begin{frontmatter}



\title{Group structures of a function spaces with the set-open topology}


\author{Alexander V. Osipov}

\ead{OAB@list.ru}


\address{Krasovskii Institute of Mathematics and Mechanics, Ural Federal
 University,\\ Ural State University of Economics, Yekaterinburg, Russia}

\begin{abstract}

In this paper, we find at the properties of the family $\lambda$
which imply that the space $C(X,\mathbb{R}^{\alpha})$ --- the set
of all continuous mappings of the space $X$ to the space
$\mathbb{R}^{\alpha}$ with the $\lambda$-open topology is a
semitopological group (paratopological group, topological group,
topological vector space and other algebraic structures) under the
usual operations of addition and multiplication (and
multiplication by scalars). For example, if $X=[0,\omega_1)$ and
$\lambda$ is a family of $C$-compact subsets of $X$, then
$C_{\lambda}(X,\mathbb{R}^{\omega})$ is a semitopological group
(locally convex topological vector space, topological algebra),
but $C_{\lambda}(X,\mathbb{R}^{\omega_1})$ is not semitopological
group.


\end{abstract}

\begin{keyword}
set-open topology \sep topological group \sep $C$-compact subset
\sep semitopological group \sep paratopological group \sep
topological vector space \sep $C_{\alpha}$-compact subset \sep
topological algebra


\MSC 37F20 \sep 26A03 \sep 03E75  \sep 54C35

\end{keyword}

\end{frontmatter}



\section{Introduction}

In articles \cite{os1, os2}, we investigated the
topological-algebraic properties of $C_{\lambda}(X)$ where
$C_{\lambda}(X)$ the space of all real-valued continuous functions
on $X$ with set-open topology. Recall that a subset $A$ of a space
$X$ is called
 {\it $C$-compact subset $X$} (or $\mathbb R$-compact) if, for any real-valued function~$f$ continuous on $X$,
 the set $f(A)$ is compact in~${\mathbb{R}}$. Given a family $\lambda$ of non-empty subsets of $X$, let
$\lambda(C)=\{A\in \lambda$ :  for every $C$-compact subset $B$ of
the space $X$ with $B\subset A$, the set $[B,U]$ is open in
$C_{\lambda}(X)$ for any open set $U$ of the space $\mathbb{R}
\}$.

\begin{theorem}(Theorem 3.3. in \cite{os2})\label{th1}
For a space $X$, the following statements are equivalent.

\begin{enumerate}

\item  $C_{\lambda}(X)=C_{\lambda, u}(X).$

\item  $C_{\lambda}(X)$ is a topological group.

\item  $C_{\lambda}(X)$ is a topological vector space.

\item $C_{\lambda}(X)$ is a locally convex topological vector
space.

\item  $\lambda$ is a family of\, $C$-compact sets and
$\lambda=\lambda(C)$.

\end{enumerate}

\end{theorem}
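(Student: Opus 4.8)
The plan is to prove the equivalence through the implications $(4)\Rightarrow(3)\Rightarrow(2)$, $(2)\Rightarrow(5)$, $(5)\Rightarrow(1)$, $(5)\Rightarrow(4)$ and $(1)\Rightarrow(2)$; chaining $(4)\Rightarrow(3)\Rightarrow(2)\Rightarrow(5)\Rightarrow(4)$ makes $(2),(3),(4),(5)$ equivalent, while $(5)\Rightarrow(1)\Rightarrow(2)\Rightarrow(5)$ ties in $(1)$. The implications $(4)\Rightarrow(3)\Rightarrow(2)$ are immediate, since a locally convex space is a topological vector space and a topological vector space is in particular a topological group under addition. I would also record at the outset two facts used repeatedly: first, the negation $f\mapsto -f$ is \emph{always} a homeomorphism of $C_\lambda(X)$, because $\{f:-f\in[A,U]\}=[A,-U]$; hence inversion is free and the whole group-theoretic content reduces to the joint continuity of addition. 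Second, the topology $C_{\lambda,u}(X)$ of uniform convergence on the members of $\lambda$ is always a topological group, and when $\lambda$ consists of $C$-compact sets the seminorms $p_A(f)=\sup_{x\in A}|f(x)|$ are finite and generate it as a locally convex topological vector space. The latter gives $(1)\Rightarrow(2)$ for free.

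For $(5)\Rightarrow(1)$ I would verify the two inclusions separately. The inclusion of the set-open topology into $C_{\lambda,u}(X)$ uses only $C$-compactness: if $f\in[A,U]$ then $f(A)$ is compact and contained in the open set $U$, so some $\varepsilon$-thickening of $f(A)$ still lies in $U$, whence the uniform ball $\{g:\sup_{x\in A}|g(x)-f(x)|<\varepsilon\}$ sits inside $[A,U]$. The reverse inclusion is exactly where the saturation hypothesis $\lambda=\lambda(C)$ is spent. Given $f$ and a basic uniform neighbourhood attached to $A\in\lambda$ and $\varepsilon>0$, I would cover the compact set $f(A)$ by finitely many open intervals $I_1,\dots,I_n$ of length $<\varepsilon$, put $B_j=A\cap f^{-1}(\overline{I_j})$, and check that each $B_j$ is a $C$-compact subset of $A$; then a finite intersection $\bigcap_j[B_j,V_j]$ (with $V_j$ a slight enlargement of $I_j$) is a set-open neighbourhood of $f$ contained in the uniform ball, and it is genuinely open precisely because $\lambda=\lambda(C)$ forces each $[B_j,V_j]$ to be open. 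This establishes $(1)$; combined with the second preliminary fact it gives $(5)\Rightarrow(4)$ as well, since the coincident topology is then the locally convex space generated by the finite seminorms $p_A$.

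The heart of the theorem, and the step I expect to be the main obstacle, is $(2)\Rightarrow(5)$: extracting from the bare hypothesis ``topological group'' both $(a)$ that every $A\in\lambda$ is $C$-compact and $(b)$ that $\lambda=\lambda(C)$. For $(a)$ I would argue from the failure of joint continuity of addition. If some $f\in C(X)$ makes $f(A)$ unbounded, then the bounded neighbourhood $[A,(-1,1)]$ of the relevant sum cannot contain any sumset $N_1+N_2$ of basic neighbourhoods, because unboundedness of $f$ on $A$ forces the summands, hence the sumset, to be unbounded on $A$ — contradicting continuity. If instead $f(A)$ is bounded but not closed, I would pick $p\in\overline{f(A)}\setminus f(A)$ together with points of $A$ on which $f$ approaches $p$, and choose two functions whose individual values spread out near $p$ while their sum is pinned down; the model case is $\mathrm{id}$ and $c-\mathrm{id}$ on a non-closed interval, where the sum is the constant $c$ yet every neighbourhood of either summand has an image, and so a sumset, of width bounded below, again defeating joint continuity against $[A,(c-\varepsilon,c+\varepsilon)]$. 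Producing these two functions in the general abstract setting, and confirming that the obstruction survives every \emph{finite} intersection of subbasic neighbourhoods, is the technical crux. For $(b)$, note $\lambda(C)\subseteq\lambda$ always, so only $\lambda\subseteq\lambda(C)$ needs proof: for $A\in\lambda$ and $C$-compact $B\subseteq A$ one must show $[B,U]$ is open. Using the $C$-compactness from $(a)$, I would realize $[B,U]$ locally as a uniform ball and argue that a group topology on $C_\lambda(X)$ must already contain such balls — in effect running the decomposition of the previous paragraph in reverse — so that the group structure forces the saturation $\lambda=\lambda(C)$. Assembling the implications then closes the equivalence.
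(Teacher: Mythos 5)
Your overall architecture (the cycle $(4)\Rightarrow(3)\Rightarrow(2)\Rightarrow(5)\Rightarrow(1)\Rightarrow(2)$ together with $(5)\Rightarrow(4)$) matches the route the paper takes when it proves the $\mathbb{R}^{\alpha}$-generalization of this statement (the quoted theorem itself is cited from an earlier paper, but the specialization $\alpha=1$ of the main proof is the relevant comparison), and your treatments of $(5)\Rightarrow(1)$, $(1)\Rightarrow(2)$ and $(5)\Rightarrow(4)$ are sound. The genuine gap is exactly in the step you flag as the crux, $(2)\Rightarrow(5)$, part $(a)$, and neither of your two cases works as described. In the unbounded case, the summands $f\in N_1$ and $-f\in N_2$ already produce the bounded sum $\mathbf{0}$, so nothing forces the sumset $N_1+N_2$ to be unbounded on $A$. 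In the non-closed case, the claim that every neighbourhood of a summand has an image ``of width bounded below'' is false: a basic neighbourhood $\bigcap_i[A_i,U_i]$ of $f$ contains constant functions (any constant whose value lies in $\bigcap_i U_i$), whose images have width zero. So the obstruction you hope to extract from \emph{joint} continuity is not present in the form you describe, and the two functions you need are never produced.

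The repair is both simpler and uses a weaker hypothesis: only \emph{separate} continuity is needed, and the perturbation should be a small \emph{constant} function. First reduce ``$f(A)$ not compact'' to ``$f(A)$ not closed'' by composing with a homeomorphism of $\mathbb{R}$ onto a bounded interval (if $f(A)$ is closed and unbounded, then $(\arctan\circ f)(A)$ is not closed); this is the role played by Theorem 1.2 of Garc\'{\i}a-Ferreira, Sanchis and Tamariz-Mascar\'ua in the paper's argument. Now pick $a\in\overline{f(A)}\setminus f(A)$ and the subbasic neighbourhood $O(f)=[A,\mathbb{R}\setminus\{a\}]$ of $f$. Continuity of $g\mapsto f+g$ at $g=\mathbf{0}$ gives a basic neighbourhood $N=\bigcap_i[B_i,W_i]$ of $\mathbf{0}$ with $f+N\subseteq O(f)$; since $0\in W_i$ for every $i$, the set $N$ contains every constant function of sufficiently small value, which disposes at once of the ``finite intersection of subbasic sets'' worry you raise. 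Choosing $x_0\in A$ with $a-f(x_0)$ that small and setting $g\equiv a-f(x_0)$ gives $g\in N$ but $(f+g)(x_0)=a$, a contradiction. Your sketch of part $(b)$ ($\lambda=\lambda(C)$) is essentially the paper's argument --- for $f\in[B,U]$ one has $f\in f+[A,(-\varepsilon,\varepsilon)]\subseteq[B,U]$ for suitable $\varepsilon$, the translate being open because translations are homeomorphisms --- and is fine once written out; likewise your verification that each $B_j=A\cap f^{-1}(\overline{I_j})$ is $C$-compact should be made explicit (intersection of a $C$-compact set with a zero-set), which is what the paper outsources to its reference on uniform convergence on families of sets.
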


In this paper, we find at the properties of the family $\lambda$
which imply that the space $C_{\lambda}(X,\mathbb{R}^{\alpha})$
--- the set of all continuous mappings of the space $X$ to the
space $\mathbb{R}^{\alpha}$ with the $\lambda$-open topology is a
semitopological group (paratopological group, topological group,
topological vector space and other algebraic structures) under the
usual operations of addition and multiplication (and
multiplication by scalars).

\medskip

\section{Main definitions and notation}

\medskip

 Let $X$ be a Tychonoff space and $\alpha$ be
a cardinal number. We shall denote by $C(X,\mathbb{R}^{\alpha})$
the set of all continuous mappings of the space $X$ to the space
$\mathbb{R}^{\alpha}$. We say that a subset $B$ of a space $X$ is
$C_{\alpha}$-compact in $X$ if for every continuous function
$f:X\mapsto \mathbb{R}^{\alpha}$, $f(B)$ is a compact subset of
$\mathbb{R}^{\alpha}$ (\cite{Gar1}, \cite{Gar2}). This concept
generalizes the notion of $\alpha$-pseudocompactness introduced by
J.F. Kennison (\cite{ken}): a space $X$ is $\alpha$-pseudocompact
if $X$ is $C_{\alpha}$-compact in itself. If $\alpha\leq\omega_0$,
then we say $C$-compact instead of $C_{\alpha}$-compact and
$\alpha$-pseudocompactness agrees with pseudocompactness.

 A family
$\lambda$ of $C_{\alpha}$-compact subsets of $X$ is said to be
 closed under (hereditary with respect to) $C_{\alpha}$-compact subsets if it satisfies
 the following condition: whenever $A\in \lambda$ and $B$ is
 a $C_{\alpha}$-compact (in $X$) subset of $A$, then $B\in \lambda$ also.

For a Hausdorff space $X$, a family $\lambda$ of subsets of $X$
and a uniform space $(Y,\mathcal{U})$ we shall denote by
$\mathcal{U}|\lambda$ the uniformity on $C(X,Y)$ generated by the
base consisting of all finite intersections of the sets of the
form

$\hat{V}|A=\{(f,g): (f(x),g(x))\in V$ for every $x\in A\}$, where
$V\in \mathcal{U}$, $A\in \lambda$.

The uniformity $\mathcal{U}|\lambda$ will be called the uniformity
of uniform convergence on family $\lambda$ induced by
$\mathcal{U}$.

 Recall that all sets of the form

 $[F,U]:=\{f\in C(X,Y):\ f(F)\subseteq U\}$, where $F\in\lambda$ and $U$ is an open subset
 of $Y$, form a subbase of the set-open ($\lambda$-open) topology on
 $C(X,Y)$.

We use the following notations for various topological spaces
 on the set $C(X,\mathbb{R}^{\alpha})$:

 $C_{\lambda,u}(X,\mathbb{R}^{\alpha})$ for the topology induced by
 the uniformity $\mathcal{U}|\lambda$,

 $C_{\lambda}(X,\mathbb{R}^{\alpha})$ for the $\lambda$-open topology.

\medskip

Given a family $\lambda$ of non-empty subsets of $X$, let
$\lambda(C_{\alpha})=\{A\in \lambda$ :  for every
$C_{\alpha}$-compact subset $B$ of the space $X$ with $B\subset
A$, the set $[B,U]$ is open in
$C_{\lambda}(X,\mathbb{R}^{\alpha})$ for any open set $U$ of the
space $\mathbb{R}^{\alpha} \}$.

 Let $\lambda_m$ denote the maximal with respect to inclusion
 family, provided that $C_{\lambda_m}(X,\mathbb{R}^{\alpha})=C_{\lambda}(X,\mathbb{R}^{\alpha})$. Note that a family
 $\lambda_m$ is unique for each family $\lambda$.

 A subset $A$ of $X$ is said to be a {\it $Y$-zero-set}
provided $A=f^{-1}(Z)$ for some zero-set $Z$ of $Y$ and $f\in
C(X,Y)$. For example, if $Y$ is the real numbers space
$\mathbb{R}$ then any zero-set subset of $X$ is a $\mathbb
R$-zero-set of $X$.

\begin{definition}(for $\pi$-network see in \cite{os3}) A family $\lambda$ of subsets of $X$, hereditary with respect to
 the $\mathbb{R}^{\alpha}$-zero-subsets of $X$ will be called a {\it
saturation} family.
 \end{definition}







\begin{definition}(Definition 2.5 in \cite{bhk}) A family $\lambda$ of subsets of $X$ is called a functional refinement if for every
$A\in \lambda$, every finite sequence $U_1,..., U_n$ of open
subsets of $Y$, and every $f\in C(X,Y)$ such that $A\subseteq
\bigcup\limits_{i=1}^{n} f^{-1}(U_i)$, there exists a finite
sequence $A_1,..., A_m$ of members of $\lambda$ which refines
$f^{-1}(U_1),...,f^{-1}(U_n)$ and whose union contains $A$.
\end{definition}

$\bullet$ A group $G$ with a topology $\tau$ is a semitopological
(paratopological, respectively) group if the multiplication is
separately continuous (jointly  continuous, respectively).

$\bullet$ If $G$ is a semitopological and the inverse operation
$x\rightarrow x^{-1}$ is continuous, then $G$ is said to be a
quasitopological group.

Clearly, that if $G$ is a paratopological group and the inverse
operation $x\rightarrow x^{-1}$ is continuous, then $G$ is a
topological group.

$\bullet$ A topological algebra over a topological field
$\mathbb{K}$ is a topological vector space together with a
continuous bilinear multiplication.

 In \cite{burb}, N.Bourbaki noticed that if $Y$ is a topological ring then $C(X,Y)$ is a ring under the usual  operations of
pointwise addition and pointwise multiplication.

\section{Main results}

\begin{theorem}\label{th1} For a Tychonoff space $X$ and a cardinal number $\alpha$, the following statements are
equivalent.

\begin{enumerate}

\item  $C_{\lambda}(X,\mathbb{R}^{\alpha})$ is a semitopological
group.

\item  $C_{\lambda}(X,\mathbb{R}^{\alpha})$ is a paratopological
group.

\item  $C_{\lambda}(X,\mathbb{R}^{\alpha})$ is a quasitopological
group.

\item  $C_{\lambda}(X,\mathbb{R}^{\alpha})$ is a topological
group.

\item  $C_{\lambda}(X,\mathbb{R}^{\alpha})$ is a topological
vector space.

\item $C_{\lambda}(X,\mathbb{R}^{\alpha})$ is a locally convex
topological vector space.

\item $C_{\lambda}(X,\mathbb{R}^{\alpha})$ is a topological ring.

\item $C_{\lambda}(X,\mathbb{R}^{\alpha})$ is a topological
algebra.

\item  $\lambda$ is a family of\, $C_{\alpha}$-compact sets and
$\lambda=\lambda(C_{\alpha})$.

\item $\lambda_m$ is a saturation family of\, $C_{\alpha}$-compact
subsets of $X$.

\item
$C_{\lambda}(X,\mathbb{R}^{\alpha})=C_{\lambda,u}(X,\mathbb{R}^{\alpha})$.

\item $\lambda_m$ is a functional refinement family of\,
$C_{\alpha}$-compact subsets of $X$.

\end{enumerate}

\end{theorem}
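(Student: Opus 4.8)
The plan is to prove this twelve-part equivalence by establishing a cycle of implications, using Theorem~\ref{th1} from \cite{os2} (the analogous result for $\alpha \leq \omega_0$) as a template and upgrading each step to handle an arbitrary cardinal $\alpha$. The logical skeleton I would aim for is a long cycle that visits all twelve conditions, supplemented by a few "hub" implications. Since conditions (1)--(4) form a descending chain of increasingly strong group-theoretic hypotheses where (4)$\Rightarrow$(3)$\Rightarrow$(2)$\Rightarrow$(1) is trivial (topological $\Rightarrow$ quasitopological $\Rightarrow$ paratopological $\Rightarrow$ semitopological, modulo the standard definitions recalled in Section~2), the real content is closing the loop: I must show that the \emph{weakest} structural assumption, (1) $C_\lambda(X,\mathbb{R}^\alpha)$ is a semitopological group, already forces the \emph{strongest} conclusions. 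Similarly (8)$\Rightarrow$(7), (6)$\Rightarrow$(5), and (5)$\Rightarrow$(4) are routine from the definitions, so the economical strategy is to prove (1)$\Rightarrow$(9), then (9)$\Leftrightarrow$(10)$\Leftrightarrow$(11)$\Leftrightarrow$(12) as a cluster of combinatorial equivalences about the family $\lambda$, and finally (11)$\Rightarrow$(8) together with (11)$\Rightarrow$(6) to reach the top of the algebraic hierarchy.

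First I would carry out (1)$\Rightarrow$(9), which is the heart of the argument. Assuming only separate continuity of addition, I would test continuity of the translation maps at the zero function using a single subbasic neighborhood $[A,U]$ with $A\in\lambda$. The key move is to feed into this neighborhood a carefully chosen continuous $f\colon X\to\mathbb{R}^\alpha$ and exploit that the translate $[A,U]-f$ must contain a basic neighborhood of $0$; unwinding the subbasic form of the $\lambda$-open topology forces every member $A\in\lambda$ to map to a compact subset of $\mathbb{R}^\alpha$ under every continuous $f$, i.e. $A$ is $C_\alpha$-compact. The second half, $\lambda=\lambda(C_\alpha)$, follows by showing that for any $C_\alpha$-compact $B\subseteq A$ the set $[B,U]$ must already be open, which is exactly what separate continuity of translation guarantees once we know the sets in $\lambda$ are $C_\alpha$-compact. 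This is the step where I expect the main obstacle to lie, because the compactness of $f(A)$ in $\mathbb{R}^\alpha$ for \emph{uncountable} $\alpha$ cannot be reduced coordinatewise in the naive way: a product of compact projections need not be compact unless one controls all coordinates simultaneously, so the argument must produce a single continuous map into $\mathbb{R}^\alpha$ witnessing non-compactness, rather than assembling it from scalar functions. This is precisely the phenomenon illustrated by the $X=[0,\omega_1)$ example in the abstract, where the behavior genuinely differs between $\mathbb{R}^\omega$ and $\mathbb{R}^{\omega_1}$.

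Next I would handle the cluster (9)$\Leftrightarrow$(10)$\Leftrightarrow$(11)$\Leftrightarrow$(12), which reformulates the condition on $\lambda$ in four equivalent ways. The equivalence (9)$\Leftrightarrow$(11) is expected to be a direct translation: $C_\lambda=C_{\lambda,u}$ says the $\lambda$-open topology coincides with the topology of uniform convergence on $\lambda$, and a standard comparison of subbases shows this coincidence is equivalent to each $A\in\lambda$ being $C_\alpha$-compact together with the stability condition $\lambda=\lambda(C_\alpha)$. For (11)$\Leftrightarrow$(10) and (11)$\Leftrightarrow$(12) I would invoke the definitions of saturation family and functional refinement family applied to $\lambda_m$, the maximal family inducing the same topology; here the point is that passing to $\lambda_m$ lets me assume hereditariness with respect to $\mathbb{R}^\alpha$-zero-sets (saturation) and the covering/refinement property (functional refinement) are each equivalent to the clean statement that uniform convergence and set-open topologies agree. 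These equivalences are where I would lean most heavily on \cite{bhk} and \cite{os3}, adapting their arguments from the scalar case by systematically replacing ``$C$-compact'' with ``$C_\alpha$-compact'' and ``zero-set'' with ``$\mathbb{R}^\alpha$-zero-set''.

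Finally I would close the cycle through the algebraic conditions. From (11), i.e. $C_\lambda=C_{\lambda,u}$, the space inherits the uniform structure $\mathcal{U}|\lambda$, and uniform convergence on $C_\alpha$-compact sets is compatible with the vector-space and ring operations: addition is uniformly continuous, scalar multiplication is continuous because the sets in $\lambda$ are $C_\alpha$-compact (so images are bounded, giving the needed control for the $\lambda r$-type estimates), and pointwise multiplication is jointly continuous since the product of two functions uniformly close on a $C_\alpha$-compact set stays uniformly close there. This yields (11)$\Rightarrow$(8) (topological algebra) and (11)$\Rightarrow$(6) (locally convex TVS, the local convexity coming from the seminorms $\sup_{x\in A}|f(x)|$ defining $\mathcal{U}|\lambda$); invoking Bourbaki's observation from \cite{burb} that $C(X,\mathbb{R}^\alpha)$ is a ring gives the algebraic structure for free and only continuity needs checking. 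Together with the trivial downward implications among (1)--(8), this completes the equivalence. The only subtlety I anticipate here is verifying joint continuity of multiplication for uncountable $\alpha$, where again one must argue with a single uniform estimate over all coordinates simultaneously rather than coordinatewise, but once (11) supplies the uniform topology this is a routine $\epsilon$-estimate on each $A\in\lambda$.
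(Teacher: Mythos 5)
Your proposal follows essentially the same route as the paper: the key implication $(1)\Rightarrow(9)$ via a translation argument with a single map into $\mathbb{R}^{\alpha}$ witnessing non-compactness, the combinatorial cluster $(9)$, $(10)$, $(11)$, $(12)$ handled through the results of \cite{os3}, \cite{bhk} and \cite{br1}, and $(11)\Rightarrow(6)$, $(11)\Rightarrow(8)$ via the coordinate pseudo-seminorms and uniform estimates on members of $\lambda$, with the remaining implications being definitional. The only cosmetic difference is that the paper closes the loop with a spanning set of one-way implications rather than proving the full two-way equivalences inside the $(9)$--$(12)$ cluster.
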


\begin{proof} $(1)\Rightarrow(9)$. Let
$C_{\lambda}(X,\mathbb{R}^{\alpha})$ be a semitopological group.
Suppose that there exists $A\in \lambda$ such that $A$ is not a
$C_{\alpha}$-compact subset of $X$. Then there is $f\in
C_{\lambda}(X,\mathbb{R}^{\alpha})$ such that $f(A)$ is not a
compact subset of $\mathbb{R}^{\alpha}$. By Theorem 1.2 in
\cite{Gar1}, we can assume that $f(A)$ is not a closed subset of
$\mathbb{R}^{\alpha}$. Let us consider a point $a\in
\overline{f(A)}\setminus f(A)$ and the subbasic open set
$O(f):=[A, \mathbb{R}^{\alpha}\setminus \{a\}]$ which contains the
point $f$. Since $C_{\lambda}(X,\mathbb{R}^{\alpha})$ is a
semitopological group, there is a basis neighborhood  $[B,W]$ of a
point $h\equiv{\bf 0}$ such that $f+[B,W]\subseteq O(f)$. Choose
$x_0\in A$ such that $f(x_0)\in (a+W)$. Let $g\equiv a-f(x_0)$ be
a constant function. It is clear that $g\in
C_{\lambda}(X,\mathbb{R}^{\alpha})$ and $g\in [B,W]$, but
$(f+g)\notin O(f)$, because $f(x_0)+g(x_0)=a\notin
\mathbb{R}^{\alpha}\setminus \{a\}$. This contradicts our
assumption that $f+[B,W]\subseteq O(f)$. It follows that $A$ is a
$C_{\alpha}$-compact subset of $X$.

Suppose that $A\in \lambda$, $B\subset A$ and $B$ is a
$C_{\alpha}$-compact subset of $X$. We claim that $[B,U]$ is an
open set in the space $C_{\lambda}(X,\mathbb{R}^{\alpha})$ for
each open set $U$ in $\mathbb{R}^{\alpha}$. Let $f\in [B,U]$ and
$\mathcal{U}$ a uniformity on the space $\mathbb{R}^{\alpha}$. By
Lemma 8.2.5 in \cite{eng}, there exists a $V\in \mathcal{U}$ such
that $B(f(B),V) \subset U$ where $B(f(B),V):=\bigcup\limits_{z\in
f(B)} \{y \in \mathbb{R}^{\alpha}, (z,y)\in V\}$. The set $W=f+[A,
Int B(0,V)]$ is the open set in
$C_{\lambda}(X,\mathbb{R}^{\alpha})$. It remains to prove that
$W\subset [B, U]$. For $g\in W$ and $x\in B$ we have
$g(x)=f(x)+h(x)$ where $h\in [A, Int B(0,V)]$. It follows that
$g(x)\in U$ and $W\subset [B,U]$.

Recall that a space $Y$  be called  cub-space if for any $x\in
Y\times Y$ there are a continuous map $f$ from $Y\times Y$ to $Y$
and a point $y\in Y$ such that $f^{-1}(y)=x$ (see \cite{os3}).
Since $\mathbb{R}^{\alpha}\times\mathbb{R}^{\alpha}$ is
homeomorphic to $\mathbb{R}^{\alpha}$ for $\alpha\geq \aleph_0$ we
have that $\mathbb{R}^{\alpha}$ is cub-space for $\alpha\geq
\omega_0$.

One can see that if $Y$ is a Tychonoff space with countable
pseudocharacter contains a nontrivial path then $Y$ is a cub-
space. Really, if $Y$ is a Tychonoff space with countable
pseudocharacter then each point in $Y\times Y$ is a zero-set.  For
a point $(x,y)\in Y\times Y$ there is a continuous function
$f:Y\times Y \mapsto \mathbb{I}=[0,1]$ such that
$f^{-1}(0)=(x,y)$. Since $Y$ contains a nontrivial path,
$\mathbb{I}$ is homeomorphic to a subspace of $Y$. It follows that
$\mathbb{R}^{\alpha}$ is cub-space for $\alpha\in\omega_0$.

$(9)\Rightarrow(11)$. Let $\lambda$ is a family of\,
$C_{\alpha}$-compact sets and $\lambda=\lambda(C_{\alpha})$.
Without loss of generality we can assume that if $B$ is
$C_{\alpha}$-compact sets  and $B\subset A$ for some $A\in
\lambda$ then $B\in \lambda$.

By Propositions 2.2 and 2.3 in \cite{os3}, the  family $\lambda$
is hereditary with respect to the
$\mathbb{R}^{\alpha}$-zero-subsets of $X$ (i.e.  any nonempty
$A\bigcap B\in \lambda$ where $A\in \lambda$ and $B$ is a
$\mathbb{R}^{\alpha}$-zero-set of $X$). By Theorem 3.3 in
\cite{os3}, the topology on $C(X,\mathbb{R}^{\alpha})$ induced by
the uniformity $\mathcal{U}|\lambda$  of uniform convergence on
the family $\lambda$ coincides with the $\lambda$-open topology on
$C(X,\mathbb{R}^{\alpha})$. It follows that
$C_{\lambda}(X,\mathbb{R}^{\alpha})=C_{\lambda,u}(X,\mathbb{R}^{\alpha})$.

$(11)\Rightarrow(6)$. Now for each $A\in \lambda$ and $\beta\in
\alpha$, define the pseudo-seminorm $p_{A,\beta}$ on
$C(X,\mathbb{R}^{\alpha})$ by $p_{A,\beta}(f)=\min \{1, sup
\{|pr_{\beta}\circ f(x)|: x\in A\}\}$ where $pr_{\beta}:
\mathbb{R}^{\alpha}=\prod\limits_{\gamma<\alpha}
\mathbb{R}_{\gamma}\mapsto \mathbb{R}_{\beta}$ is the $\beta$-th
projection from $\mathbb{R}^{\alpha}$ onto $\mathbb{R}_{\beta}$.

Also for each $A\in \lambda$ and $\epsilon>0$, let
$V_{A,\beta,\epsilon}=\{f\in C(X,\mathbb{R}^{\alpha}) :
p_{A,\beta}(f)< \epsilon \}$.

Let $\mathcal{V}=\{V_{A,\beta_1,\epsilon}\bigcap...\bigcap
V_{A,\beta_s,\epsilon} : A\in \lambda, s\in \omega, \epsilon>0
\}$.

It can be easily shown that for each $f\in
C(X,\mathbb{R}^{\alpha})$, $f+\mathcal{V}:=\{f+V : V\in
\mathcal{V} \}$ form a neighborhood base at $f$. We say that this
topology is generated by the collection of pseudo-seminorms
$\{p_{A,\beta} : A\in \lambda, \beta\in \alpha\}$. Note that if we
choose $\epsilon\in (0,1)$, then for each $f\in
C(X,\mathbb{R}^{\alpha})$ and $A\in \lambda$, we have
$f+V\subseteq <f,A,\epsilon>$ for some $V\in \mathcal{V}$ and
$<f,A,\frac{\epsilon}{2}>\subseteq f+V$. This shows that the
topology of uniform convergence on $\lambda$ is the same as the
topology generated by the collection of pseudo-seminorms
$\{p_{A,\beta} : A\in \lambda, \beta\in \alpha\}$. We see from
this point of view that $C_{\lambda,u}(X,\mathbb{R}^{\alpha})$ is
a topological group with respect to addition. By
$C_{\lambda}(X,\mathbb{R}^{\alpha})=C_{\lambda,u}(X,\mathbb{R}^{\alpha})$
and the fact that $C_{\lambda}(X,\mathbb{R}^{\alpha})$ is a
topological (semitopological) group, we have that $\lambda$ is a
family of\, $C_{\alpha}$-compact subsets of $X$ (see the
implication $(1)\Rightarrow(9)$). It follows that the topology is
generated by the collection of seminorms $\{p_{A,\beta} : A\in
\lambda, \beta\in \alpha\}$. Consequently,
$C_{\lambda}(X,\mathbb{R}^{\alpha})$ is a locally convex
topological vector space.

$(6)\Rightarrow(5)$. It is immediate.

$(11)\Rightarrow(8)$. By the implication $(11)\Rightarrow(6)$,
$C_{\lambda}(X,\mathbb{R}^{\alpha})$ is a locally convex
topological vector space. It remains to prove the continuity of
the operation of multiplication. Let $W=[A,V]$ be a base
neighborhood of the point ${\bf 0}$ where $A\in \lambda$ and $V$
is an open set of $\mathbb{R}^{\alpha}$. Since
$\mathbb{R}^{\alpha}$ is a topological algebra, there is an open
set $V^1$ of $\mathbb{R}^{\alpha}$ such that $V^1*V^1\subseteq V$.
Let $W^1=[A,V^1]$. Then $W^1*W^1\subseteq W$. Really,
$W^1*W^1=\{f*g : f\in W^1, g\in W^1\}=\{f*g : f(A)\subseteq V^1$
and $g(A)\subseteq V^1\}$. Clearly, that $f(x)*g(x)\in V^1*V^1$
for each $x\in A$. It follows that $(f*g)(A)\subseteq V$ and
$W^1*W^1\subseteq W$.
 We prove that if $W=[A,V]$ be a base neighborhood of the
point ${\bf 0}$ and $f\in C(X,\mathbb{R}^{\alpha})$ then there is
an open set $V^1\ni 0$ such that $f(A)*V^1\subseteq V$ and
$V^1*f(A)\subseteq V$. Indeed, $g=f*h$ and $g^1=h^1*f$ where
$h,h^1\in W^1$. Then $g(x)=f(x)*h(x)\in f(A)*V^1$ and
$g^1(x)=h^1(x)*f(x)\in V^1*f(A)$ for each $x\in A$. Note that
$g(A)\subseteq V$ and $g^1(A)\subseteq V$.

By definitions of algebraic structures we have the next
implications:

$(8)\Rightarrow(7)\Rightarrow(4)$ and
$(8)\Rightarrow(5)\Rightarrow(4)\Rightarrow(3)\Rightarrow(2)\Rightarrow(1)$.

$(9)\Rightarrow(10)$. Let $\lambda$ is a family of\,
$C_{\alpha}$-compact sets and $\lambda=\lambda(C_{\alpha})$.
Without loss of generality we can assume that if $B$ is
$C_{\alpha}$-compact sets  and $B\subset A$ for some $A\in
\lambda$ then $B\in \lambda$.

Note that for any finite collection $A_1,..., A_k\in \lambda$ and
an open set $W$ of $\mathbb{R}^{\alpha}$ the set
$[A_1\bigcup...\bigcup A_k, W]$ is an open set of
$C_{\lambda}(X,\mathbb{R}^{\alpha})$. Hence, without loss of
generality we can assume that $\lambda$ is closed under finite
unions.

Let $\mu:=\{ B : B$ is a $C_{\alpha}$-compact subset of $X$ and
$B\subseteq \overline{A}$ for $A\in \lambda\}$. We prove that
$\lambda_m=\mu$. Note that if $A\in \lambda$, then
$\overline{A}\in \lambda_m$. Really, $[A,W]=[\overline{A},W]$ for
any open set $W$ of $\mathbb{R}^{\alpha}$ and $C_{\alpha}$-compact
subset $A$ of $X$.

1. $\mu\subseteq \lambda_m$. Let $B$ be a $C_{\alpha}$-compact
subset of $X$ and $B\subseteq \overline{A}$ for some $A\in
\lambda$. Consider a set $[B, V]$ for an open set $V$ of
$\mathbb{R}^{\alpha}$. Let $f\in [B,V]$. Since $f(B)$ is a compact
set there is a zero-set $Z$ of $\mathbb{R}^{\alpha}$ such that
$f(B)\subseteq Z\subseteq V$. Consider a zero-set
$f^{-1}(Z)\bigcap \overline{A}$. Since $A$ a $C_{\alpha}$-compact
subsets of $X$, $D=f^{-1}(Z)\bigcap A\neq\emptyset$, and, by
condition (9), $[D,V]$ is an open set in
$C_{\lambda}(X,\mathbb{R}^{\alpha})$. It follows that
$[\overline{D},V]$ is an open set in
$C_{\lambda}(X,\mathbb{R}^{\alpha})$, too. Note that $f\in
[\overline{D},V]\subseteq [B,V]$. Hence $[B,V]$ is an open set in
$C_{\lambda}(X,\mathbb{R}^{\alpha})$ for any open set $V$ in
$\mathbb{R}^{\alpha}$ and $B\in \lambda_m$.

2. $\lambda_m\subseteq \mu$. Suppose that $B\in \lambda_m$ and
$B\notin \mu$. Let $W$ be an open set in $\mathbb{R}^{\alpha}$
such that $W\neq\mathbb{R}^{\alpha}$. Then $[B,W]$ is an open set
in $C_{\lambda}(X,\mathbb{R}^{\alpha})$ and, hence, for $f\in
[B,W]$ there is a base neighborhood $\bigcap\limits_{i=1}^{k}
[A_i,W_i]$ of the point $f$ where $A_i=\overline{A_i}$ and $A_i\in
\mu$ for $i=\overline{1,k}$ such that $\bigcap\limits_{i=1}^{k}
[A_i,W_i]\subseteq [B,W]$. Since $B\notin \mu$, $B\setminus
\bigcup\limits_{i=1}^{k} A_i\neq\emptyset$. Let $h\in
C(X,\mathbb{R}^{\alpha})$ such that $h\upharpoonright
\bigcup\limits_{i=1}^{k} A_i=f$ and $h(z)\notin W$ for some $z\in
B\setminus \bigcup\limits_{i=1}^{k} A_i$. Then $h\in
\bigcap\limits_{i=1}^{k} [A_i,W_i]$, but $h\notin [B,W]$, a
contradiction.

$(10)\Rightarrow(11)$. By Lemma 3.7 and Corollary 3.8 in
\cite{bhk}, we have that
$C_{\lambda}(X,\mathbb{R}^{\alpha})=C_{\lambda_m}(X,\mathbb{R}^{\alpha})=C_{\lambda,u}(X,\mathbb{R}^{\alpha})=C_{\lambda_m,u}(X,\mathbb{R}^{\alpha})$.

$(10)\Rightarrow(12)$. Let $A\in \lambda_m$, $U_1,...,U_n$ be a
finite sequence of open subsets of $\mathbb{R}^{\alpha}$ and $f\in
C(X,\mathbb{R}^{\alpha})$ such that $A\subseteq
\bigcup\limits_{i=1}^{n} f^{-1}(U_i)$. For $y\in f(A)\bigcap U_i$
we choose a zero set $F(y)$ such that $y\in Int F(y)\subseteq
F(y)\subseteq U_i$. Then $\{Int F(y): y\in f(A)\bigcap U_i$ and
$i=1,...,n\}$ is an open cover of the compact set $f(A)$. There
exists a finite sequence $F(y_1),..., F(y_m)$ such that
$f(A)\subset \bigcup_{j=1}^{m} IntF(y_j)$. Then
$A_j=f^{-1}(F(y_j))$ is zero-set of $X$ for $j=1,...m$. Since
$\lambda_m$ is a saturation family, $B_j=A\bigcap A_j\in
\lambda_m$. Since $A\subset \bigcup\limits_{j=1}^{m} A_j$, then
$A=\bigcup\limits_{j=1}^{m} B_j$, and the finite sequence
$B_1,...,B_m$ refines $f^{-1}(U_1),..., f^{-1}(U_n)$.

$(12)\Rightarrow(11)$. By Theorem 2 in \cite{br1}, replacing an
admissible family by a functional refinement family.
\end{proof}

Note that a space $C_{\lambda}(X,\mathbb{R}^{\alpha})$ is a
Hausdorff if and only if $\lambda$ is a $\pi$-network of $X$.

\begin{corollary} For a Tychonoff space $X$ and a cardinal number $\alpha$, the following statements are
equivalent.

\begin{enumerate}

\item  $C_{\lambda}(X,\mathbb{R}^{\alpha})$ is a Hausdorff
semitopological group.

\item  $C_{\lambda}(X,\mathbb{R}^{\alpha})$ is a Hausdorff
paratopological group.

\item  $C_{\lambda}(X,\mathbb{R}^{\alpha})$ is a Hausdorff
quasitopological group.

\item  $C_{\lambda}(X,\mathbb{R}^{\alpha})$ is a Hausdorff
topological group.

\item  $C_{\lambda}(X,\mathbb{R}^{\alpha})$ is a Hausdorff
topological vector space.

\item $C_{\lambda}(X,\mathbb{R}^{\alpha})$ is a Hausdorff locally
convex topological vector space.

\item $C_{\lambda}(X,\mathbb{R}^{\alpha})$ is a Hausdorff
topological ring.

\item $C_{\lambda}(X,\mathbb{R}^{\alpha})$ is a Hausdorff
topological algebra.

\item  $\lambda$ is a $\pi$-network of $X$ consisting of\,
$C_{\alpha}$-compact sets and $\lambda=\lambda(C_{\alpha})$.

\item $\lambda_m$ is a $\pi$-network of $X$ and is a saturation
family of\, $C_{\alpha}$-compact sets.

\item $\lambda_m$ is a $\pi$-network of $X$ and is a functional
refinement family of\, $C_{\alpha}$-compact sets of $X$.
\end{enumerate}

\end{corollary}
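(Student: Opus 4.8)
The plan is to derive the corollary from Theorem~\ref{th1} together with the observation recorded just above it, namely that $C_{\lambda}(X,\mathbb{R}^{\alpha})$ is Hausdorff if and only if $\lambda$ is a $\pi$-network of $X$. Each of the algebraic statements (1)--(8) of the corollary is its counterpart in Theorem~\ref{th1} with the single extra hypothesis that the space be Hausdorff. Since Theorem~\ref{th1} already establishes that its statements (1)--(8) are mutually equivalent, conjoining each of them with the \emph{same} additional property ``$C_{\lambda}(X,\mathbb{R}^{\alpha})$ is Hausdorff'' preserves all these equivalences; hence items (1)--(8) of the corollary are equivalent to one another. What remains is to match this common class of spaces with the combinatorial conditions (9), (10), (11).

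First I would treat (9). By Theorem~\ref{th1} the conjunction ``$C_{\lambda}(X,\mathbb{R}^{\alpha})$ carries one of the algebraic structures, and it is Hausdorff'' is equivalent to ``$\lambda$ is a family of $C_{\alpha}$-compact sets with $\lambda=\lambda(C_{\alpha})$, and the space is Hausdorff''. Using the note to rewrite the clause ``the space is Hausdorff'' as ``$\lambda$ is a $\pi$-network of $X$'' turns this into exactly statement (9) of the corollary; this is a direct substitution requiring no further work.

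For (10) and (11) the same scheme applies, but the relevant combinatorial descriptions in Theorem~\ref{th1} --- its statements (10) and (12) --- are phrased in terms of $\lambda_m$ rather than $\lambda$. I must therefore express the Hausdorff clause as a property of $\lambda_m$. The key point is that $\lambda_m$ is defined so that $C_{\lambda_m}(X,\mathbb{R}^{\alpha})=C_{\lambda}(X,\mathbb{R}^{\alpha})$ \emph{as topological spaces}; the two function spaces are therefore Hausdorff simultaneously, and applying the note to the family $\lambda_m$ shows that the space is Hausdorff if and only if $\lambda_m$ is a $\pi$-network. Adjoining this clause to statements (10) and (12) of Theorem~\ref{th1} yields precisely statements (10) and (11) of the corollary.

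The step needing the most care is this transfer of the $\pi$-network property between $\lambda$ and $\lambda_m$; everything else is bookkeeping. Since $\lambda\subseteq\lambda_m$ and a larger family is more readily a $\pi$-network, the implication ``$\lambda$ a $\pi$-network $\Rightarrow$ $\lambda_m$ a $\pi$-network'' is immediate, whereas the reverse direction is exactly where the equality $C_{\lambda}(X,\mathbb{R}^{\alpha})=C_{\lambda_m}(X,\mathbb{R}^{\alpha})$ of topologies --- and hence the coincidence of their Hausdorff separation --- must be invoked.
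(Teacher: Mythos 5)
Your proposal is correct and follows exactly the route the paper intends: the corollary is stated immediately after the remark that $C_{\lambda}(X,\mathbb{R}^{\alpha})$ is Hausdorff if and only if $\lambda$ is a $\pi$-network, and is meant as a direct combination of that remark with Theorem~3.1. Your extra care in transferring the $\pi$-network condition from $\lambda$ to $\lambda_m$ via the equality of topologies $C_{\lambda}(X,\mathbb{R}^{\alpha})=C_{\lambda_m}(X,\mathbb{R}^{\alpha})$ is a detail the paper leaves implicit, and it is handled correctly.
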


\section{Example}

The following results was obtained in \cite{Gar1}

\begin{theorem}(Theorem 2.6 in \cite{Gar1})\label{ga} Let $\alpha$ be a cardinal
with $cf(\alpha)>\omega$. Then $[0,\alpha)$ is
$\gamma$-pseudocompact for all $\omega\leq \gamma<cf(\alpha)$ and
it is not $cf(\alpha)$-pseudocompact.
\end{theorem}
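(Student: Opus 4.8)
The plan is to prove the two halves separately, in both cases reducing everything to the behaviour of continuous real-valued functions on $[0,\alpha)$. Write $\kappa=cf(\alpha)$, and note that $cf(\alpha)>\omega$ forces $\alpha$ to be a limit ordinal and $\kappa$ to be an uncountable regular cardinal. The crux of the positive direction, and the step I expect to be the main obstacle, is the following lemma: if $cf(\alpha)>\omega$ then every continuous $g:[0,\alpha)\to\mathbb{R}$ is eventually constant, i.e. there are $\xi<\alpha$ and $c\in\mathbb{R}$ with $g\equiv c$ on $[\xi,\alpha)$. To prove it I would use that, since $cf(\alpha)>\omega$, every countable set of ordinals below $\alpha$ is bounded. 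Fix $\epsilon>0$; if every tail contained two points whose $g$-values are $\epsilon$-apart, I could recursively build a strictly increasing sequence $z_0<z_1<z_2<\cdots$ with $|g(z_{2n})-g(z_{2n+1})|\ge\epsilon$, whose countable supremum $\eta$ is $<\alpha$. Continuity of $g$ at the limit point $\eta$ forces $g(z_n)\to g(\eta)$, so consecutive differences tend to $0$, a contradiction. Hence for each $k$ there is $\xi_{1/k}$ beyond which $g$ has oscillation $<1/k$; taking $\xi=\sup_k\xi_{1/k}<\alpha$ (again a countable, hence bounded, supremum) gives a tail on which $g$ is constant.

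Granting the lemma, the $\gamma$-pseudocompactness for $\omega\le\gamma<\kappa$ follows at once. A continuous $f:[0,\alpha)\to\mathbb{R}^{\gamma}$ has coordinates $f_{\beta}=pr_{\beta}\circ f$ for $\beta<\gamma$, each eventually constant on some $[\xi_{\beta},\alpha)$. The index set has size $\le\gamma<\kappa=cf(\alpha)$, so $\{\xi_{\beta}:\beta<\gamma\}$ is bounded and $\xi=\sup_{\beta}\xi_{\beta}<\alpha$; thus $f$ is constant on $[\xi,\alpha)$. Consequently $f([0,\alpha))=f([0,\xi])$, the continuous image of the compact ordinal space $[0,\xi]$, which is compact. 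Hence $[0,\alpha)$ is $\gamma$-pseudocompact.

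For the negative direction I would exhibit a single continuous $f:[0,\alpha)\to\mathbb{R}^{\kappa}$ with non-compact image. Choose a strictly increasing sequence $\langle\alpha_{\beta}:\beta<\kappa\rangle$ cofinal in $\alpha$, and define $f(x)_{\beta}=\chi_{(\alpha_{\beta},\alpha)}(x)$. Each tail $(\alpha_{\beta},\alpha)$ is clopen in $[0,\alpha)$, since its complement $[0,\alpha_{\beta}]=[0,\alpha_{\beta}+1)$ is clopen; hence every coordinate is continuous and $f$ maps continuously into $\{0,1\}^{\kappa}\subseteq\mathbb{R}^{\kappa}$. The all-ones point $p$ (with $p_{\beta}=1$ for all $\beta<\kappa$) lies in the closure of $f([0,\alpha))$: a basic neighbourhood restricts only finitely many coordinates $F\subseteq\kappa$, and choosing any $x>\max_{\beta\in F}\alpha_{\beta}$ gives $f(x)_{\beta}=1$ for all $\beta\in F$. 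Yet $p\notin f([0,\alpha))$, because $f(x)=p$ would require $x>\alpha_{\beta}$ for every $\beta<\kappa$, i.e. $x\ge\sup_{\beta}\alpha_{\beta}=\alpha$, which is impossible. Thus the image is not closed, hence not compact, and $[0,\alpha)$ is not $\kappa$-pseudocompact.

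Beyond the eventually-constant lemma, no step is difficult, but I would emphasize where the threshold is genuinely used: the simultaneous bounding of the countably-many-at-a-time stabilization points $\xi_{\beta}$ works precisely because there are fewer than $cf(\alpha)$ of them, and it is exactly this bounding that fails when $\gamma=\kappa$, which is what the cofinal construction in the last paragraph exploits to destroy compactness. So the sharp boundary $\gamma<cf(\alpha)$ versus $\gamma=cf(\alpha)$ is built into the same two ingredients — bounding fewer-than-$\kappa$ ordinals, and producing a cofinal (length-$\kappa$) witness.
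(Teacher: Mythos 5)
Your proof is correct. Note that the paper itself gives no proof of this statement --- it is quoted verbatim as Theorem 2.6 of the cited García-Ferreira--Sanchis--Tamariz-Mascarúa paper --- but your argument is the standard one for exactly this result: the eventually-constant lemma for real-valued maps on $[0,\alpha)$ when $cf(\alpha)>\omega$, simultaneous bounding of fewer than $cf(\alpha)$ stabilization ordinals for the positive half, and a cofinal sequence of characteristic functions of clopen tails producing a non-closed image in $\{0,1\}^{cf(\alpha)}$ for the negative half. All steps check out, including the clopenness of the tails and the closure point argument.
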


\begin{corollary}(Corollary 2.8 in \cite{Gar1})\label{cr28} If $\gamma$ and $\alpha$ are cardinals with
$\gamma<\alpha$, then $[0,\gamma^{+})$ is $\gamma$-pseudocompact
and is not $\alpha$-pseudocompact.
\end{corollary}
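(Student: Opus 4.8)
The plan is to derive both assertions from Theorem \ref{ga} specialized to the ordinal space $[0,\gamma^{+})$, exploiting that a successor cardinal is regular; throughout I take $\gamma$ to be infinite, as is implicit in the notion of $\gamma$-pseudocompactness. First I would record that $\gamma^{+}$ is regular, so $cf(\gamma^{+})=\gamma^{+}$, and that $\gamma^{+}\ge\omega_1$ forces $cf(\gamma^{+})>\omega$; hence Theorem \ref{ga} applies with the cardinal $\gamma^{+}$ in the role of its $\alpha$.

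The positive assertion is then immediate: since $\omega\le\gamma<\gamma^{+}=cf(\gamma^{+})$, the first clause of Theorem \ref{ga} shows that $[0,\gamma^{+})$ is $\gamma$-pseudocompact. The second clause shows that $[0,\gamma^{+})$ is not $cf(\gamma^{+})=\gamma^{+}$-pseudocompact, which handles the borderline case of the negative assertion.

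It remains to upgrade ``not $\gamma^{+}$-pseudocompact'' to ``not $\alpha$-pseudocompact'' for every cardinal $\alpha>\gamma$. Since $\gamma<\alpha$ means $\gamma^{+}\le\alpha$, I would first establish the monotonicity principle that $\alpha$-pseudocompactness implies $\beta$-pseudocompactness whenever $\beta\le\alpha$: given continuous $g:X\to\mathbb{R}^{\beta}$, pad it with zeros to a continuous $f:X\to\mathbb{R}^{\alpha}$, note that $f(X)$ is compact by $\alpha$-pseudocompactness, and observe that $g(X)$ is the image of $f(X)$ under the coordinate projection $\mathbb{R}^{\alpha}\to\mathbb{R}^{\beta}$, hence compact as a continuous image of a compact set. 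Taking the contrapositive with $\beta=\gamma^{+}\le\alpha$ gives that $[0,\gamma^{+})$ is not $\alpha$-pseudocompact. The only content beyond the cofinality bookkeeping lies in this monotonicity principle, so that is the step I expect to require a (short) argument; the rest follows mechanically from Theorem \ref{ga}.
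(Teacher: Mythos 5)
Your derivation is correct: the paper imports this as Corollary 2.8 of \cite{Gar1} without proof, and deducing it from Theorem \ref{ga} via the regularity of $\gamma^{+}$ together with the downward monotonicity of $\alpha$-pseudocompactness (pad with zeros, then project) is exactly the intended route. The only point worth making explicit, which you do, is that $\gamma$ must be infinite, since otherwise $[0,\gamma^{+})$ is finite and the negative assertion fails.
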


It then we have the following result

\begin{theorem} Let $\alpha$ be a cardinal
with $cf(\alpha)>\omega$, $\omega\leq \gamma<cf(\alpha)$ and
$\lambda$ be a family of $C_{\gamma}$-compact subsets of
$X=[0,\alpha)$. Then $C_{\lambda}(X,\mathbb{R}^{\gamma})$ is a
semitopological group (locally convex topological vector space,
topological algebra), but $C_{\lambda}(X,\mathbb{R}^{cf(\alpha)})$
is not semitopological group.
\end{theorem}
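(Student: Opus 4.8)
The plan is to read off both assertions from the equivalence theorem proved above, applied once with target exponent $\gamma$ and once with target exponent $cf(\alpha)$; in each case the operative clause is condition~(9). The only genuinely space-specific ingredient is Theorem~\ref{ga}: since $\omega\le\gamma<cf(\alpha)$ and $cf(\alpha)>\omega$, the ordinal space $X=[0,\alpha)$ is $\gamma$-pseudocompact but not $cf(\alpha)$-pseudocompact. Everything else is a formal consequence of the equivalences (1)--(12).

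For the positive half I would take $\lambda$ to be the family of all $C_{\gamma}$-compact subsets of $X$, which is the natural family here; note that an \emph{arbitrary} family of $C_{\gamma}$-compact sets, such as the singleton $\{X\}$, need not satisfy (9), so fullness — or at least closure under passage to $C_{\gamma}$-compact subsets — is what is required. Observe that $X\in\lambda$, because $X$ is $\gamma$-pseudocompact by Theorem~\ref{ga}. Condition~(9) for the exponent $\gamma$ is then immediate: its first clause is the hypothesis, and for the second clause, whenever $A\in\lambda$ and $B\subseteq A$ is $C_{\gamma}$-compact we have $B\in\lambda$, so $[B,U]$ is already a subbasic open set of $C_{\lambda}(X,\mathbb{R}^{\gamma})$ for every open $U\subseteq\mathbb{R}^{\gamma}$; hence $\lambda=\lambda(C_{\gamma})$. (If one prefers to argue without heredity, one replaces $B$ by $D=f^{-1}(Z)\cap A$ for a zero-set $Z$ with $f(B)\subseteq Z\subseteq U$, uses that the intersection of a $C_{\gamma}$-compact set with an $\mathbb{R}^{\gamma}$-zero-set is again $C_{\gamma}$-compact, and checks that $f\in[\overline{D},U]=[D,U]\subseteq[B,U]$, exactly as in the implication $(9)\Rightarrow(10)$ above.) With (9) in hand, the equivalence of (1), (6) and (8) yields at once that $C_{\lambda}(X,\mathbb{R}^{\gamma})$ is simultaneously a semitopological group, a locally convex topological vector space and a topological algebra.

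For the negative half I would argue by contradiction through the same theorem, now with exponent $cf(\alpha)$. If $C_{\lambda}(X,\mathbb{R}^{cf(\alpha)})$ were a semitopological group, then the implication $(1)\Rightarrow(9)$, applied with $\alpha$ there replaced by $cf(\alpha)$, would force every member of $\lambda$ to be a $C_{cf(\alpha)}$-compact subset of $X$. But $X\in\lambda$, and by Theorem~\ref{ga} the space $X=[0,\alpha)$ is not $cf(\alpha)$-pseudocompact, i.e. $X$ is not $C_{cf(\alpha)}$-compact in itself — a contradiction; hence $C_{\lambda}(X,\mathbb{R}^{cf(\alpha)})$ is not a semitopological group. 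The heart of the matter, and the step I expect to be the real obstacle, is isolating the single witness $X$: it is $C_{\gamma}$-compact for every $\gamma<cf(\alpha)$ yet fails to be $C_{cf(\alpha)}$-compact, so the very family that verifies condition~(9) for the smaller exponent violates the first clause of (9) for the exponent $cf(\alpha)$. Producing this witness — equivalently, a continuous map $X\to\mathbb{R}^{cf(\alpha)}$ whose image on $X$ is not compact — is exactly the content of Theorem~\ref{ga}.
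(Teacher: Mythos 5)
Your proposal is correct and takes essentially the same route as the paper's own (much terser) two-line proof: the pseudocompactness facts come from the theorem on $[0,\alpha)$ being $\gamma$-pseudocompact but not $cf(\alpha)$-pseudocompact, and the algebraic conclusions are read off from condition (9) of the main equivalence theorem, applied once with exponent $\gamma$ and once with exponent $cf(\alpha)$. Your explicit observation that $\lambda$ must be understood as the full (or at least hereditary) family of $C_{\gamma}$-compact subsets --- so that condition (9) actually holds for exponent $\gamma$ and so that $X\in\lambda$ serves as the witness destroying condition (9) for exponent $cf(\alpha)$ --- is a genuine clarification of a point the paper leaves implicit.
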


\begin{proof} By Theorem \ref{ga}, $X$ is $\gamma$-pseudocompact and
is not $cf(\alpha)$-pseudocompact. By Theorem \ref{th1},
$C_{\lambda}(X,\mathbb{R}^{\gamma})$ is a semitopological group
(locally convex topological vector space, topological algebra),
but $C_{\lambda}(X,\mathbb{R}^{cf(\alpha)})$ is not
semitopological group.

\end{proof}

For example, if $X=[0,\omega_1)$ and $\lambda$ is a family of
$C$-compact subsets of $X$, then
$C_{\lambda}(X,\mathbb{R}^{\omega})$ is a semitopological group
(locally convex topological vector space, topological algebra),
but $C_{\lambda}(X,\mathbb{R}^{\omega_1})$ is not semitopological
group.

\end{document}